\documentclass[12pt,oneside]{amsart}

\setlength{\textwidth}{150mm}
\setlength{\textheight}{220mm}
\setlength{\oddsidemargin}{5.5mm}
\setlength{\evensidemargin}{5.5mm}

\usepackage{amssymb}
\usepackage{amscd}

\title{Rational curves of degree four with two inner Galois points} 
\author{Satoru Fukasawa}

\subjclass[2010]{14H50}
\keywords{Galois point, plane curve, rational curve, degree four}
\address{Department of Mathematical Sciences, Faculty of Science, Yamagata University, Kojirakawa-machi 1-4-12, Yamagata 990-8560, Japan} 
\email{s.fukasawa@sci.kj.yamagata-u.ac.jp} 
\thanks{The author was partially supported by JSPS KAKENHI Grant Number 25800002.}

\newtheorem{proposition}{Proposition}
 
\newtheorem{conjecture}{Conjecture} 
\newtheorem{fact}{Fact}

\newtheorem{remark}{Remark}
\newtheorem{lemma}{Lemma}

\begin{document}
\begin{abstract} 
We characterize plane rational curves of degree four with two or more inner Galois points. 
A computer verifies the existence of plane rational curves of degree four with three inner Galois points. 
This would be the first example of a curve with exactly three them. 
Our result implies that Miura's bound is sharp for rational curves. 
\end{abstract}

\maketitle
\section{Introduction}  
Let the base field $K$ be an algebraically closed field of characteristic $p \ge 0$ and let $C \subset \Bbb P^2$ be an irreducible plane curve of degree $d \ge 4$. 
For a point $P \in C$, if the function field extension $K(C)/\pi_P^{*}K(\Bbb P^1)$ induced by the projection $\pi_P$ from $P$ is Galois, then the point $P$ is said to be (inner) {\it Galois} for $C$. 
This notion was introduced by Hisao Yoshihara (\cite{miura-yoshihara, yoshihara}). 
We denote by $\delta(C)$ the number of Galois points which are smooth points of $C$. 
It is a natural problem: {\it What is $\delta(C)$?} 
If $p=0$ and $C$ is smooth, Yoshihara and Miura determined $\delta(C)$ completely (\cite{miura-yoshihara, yoshihara}). 
However, this problem would be difficult in general. 
For example, only six (only two! if $p=0$) types  of curves with $\delta(C) \ge 2$ are known (see the table in \cite{yoshihara-fukasawa}). 

Throughout this note, we assume that $p \ne 2, 3$, $d=4$ and $C$ is rational. 
Let $\varphi: \Bbb P^1 \rightarrow C$ be the normalization. 
Note that the morphism $\pi_P \circ \varphi$, where $P \in C$ is smooth and Galois, has only tame ramification under the assumption that $p \ne 3$ and $d=4$ (\cite[III. 7.2]{stichtenoth}). 
We use the same notation as in \cite{miura}. 
We denote by $\delta_2(C)$ (resp. $\delta_0(C)$) the number of Galois points $P \in C$ which are smooth points with $I_P(C, T_PC)=4$ (resp. $I_P(C, T_PC)=2$), where $I_P(C, T_PC)$ is the intersection multiplicity of $C$ and the tangent line $T_PC$ at $P$. 
As noted in \cite{miura}, $\delta(C)=\delta_2(C)+\delta_0(C)$. 
In this note, we characterize curves with $\delta(C) \ge 2$. 

\begin{proposition} \label{TotalFlexes}
Let $C \subset \Bbb P^2$ be a plane rational curve of degree four. 
We have $\delta_2(C) \ge 2$ if and only if $C$ is projectively equivalent to the curve which is the image of the morphism 
$$ \Bbb P^1 \rightarrow \Bbb P^2; (s:t) \mapsto (s^4:s(s+t)^3:t^4).  $$
In this case, $\delta(C)=\delta_2(C)=2$. 
\end{proposition}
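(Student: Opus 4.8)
\emph{Proof strategy (proposal).}
The plan is to encode the Galois condition at a smooth point as a membership statement in the three-dimensional space $V:=\langle f_0,f_1,f_2\rangle\subset K[s,t]_4$, where $\varphi=(f_0:f_1:f_2)$, and then run a short classification. The dictionary I would set up is as follows. If $P$ is a smooth point of $C$, then $\pi_P\circ\varphi:\Bbb P^1\to\Bbb P^1$ has degree $3$; if moreover $P$ is Galois, its Galois group is $\Bbb Z/3\Bbb Z\subset \mathrm{Aut}(\Bbb P^1)$, acting with exactly two fixed points $R_1=\{\lambda_1=0\}$ and $R_2=\{\lambda_2=0\}$, which are precisely the totally ramified points of $\pi_P\circ\varphi$ (here $p\ne 3$ is used, via Riemann--Hurwitz and the structure of $\Bbb Z/3\Bbb Z$ acting on $\Bbb P^1$). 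Writing $\pi_P$ by two independent linear forms vanishing at $P$, pulling them back along $\varphi$, and dividing out the common linear factor (which has degree $1$ and vanishes exactly at $\widetilde P:=\varphi^{-1}(P)$; call it $m$), one finds that $\pi_P\circ\varphi$ equals the quotient map $(\lambda_1^3:\lambda_2^3)$ followed by an automorphism of the target. Chasing this back through $V$ yields
\[ m\lambda_1^3\in V \quad\text{and}\quad m\lambda_2^3\in V. \]
Finally $I_P(C,T_PC)=4$ holds exactly when $\widetilde P$ is a ramified point of $\pi_P\circ\varphi$, i.e. when $m$ is proportional to $\lambda_1$ or $\lambda_2$; in that case one of the two relations reads $m^4\in V$.

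For the implication ``$\delta_2(C)\ge 2\Rightarrow C$ is the stated curve'', take two smooth Galois points $P_1,P_2$ with $I_{P_i}(C,T_{P_i}C)=4$. By the dictionary each $P_i$ gives linear forms $\mu_i$ (vanishing at $\widetilde P_i$) and $\nu_i\not\sim\mu_i$ with $\mu_i^4\in V$ and $\mu_i\nu_i^3\in V$. Since $P_1\ne P_2$ we have $\mu_1\not\sim\mu_2$, so after a coordinate change on $\Bbb P^1$ we may assume $\mu_1=s$, $\mu_2=t$; hence $s^4,t^4\in V$. Write $\nu_1=\alpha s+\beta t$. Then $\beta\ne 0$ since $\nu_1\not\sim s$; and $\alpha\ne 0$ as well, for otherwise $\nu_1\sim t$ and $V=\langle s^4,st^3,t^4\rangle$, forcing $C=\{(s^4:st^3:t^4)\}$ --- but on that curve $\varphi(1:0)$ is a triple point, contradicting smoothness of $P_1=\varphi(\widetilde P_1)$. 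Rescaling $t$ we may take $\nu_1=s+t$, so $s(s+t)^3\in V$; as $s^4,\,s(s+t)^3,\,t^4$ are linearly independent and $\dim V=3$, we get $V=\langle s^4,s(s+t)^3,t^4\rangle$, i.e. $C$ is projectively equivalent to the image of $(s:t)\mapsto(s^4:s(s+t)^3:t^4)$. A short check shows this morphism is birational onto a quartic.

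For the converse and the last assertion, set $\varphi=(s^4:s(s+t)^3:t^4)$, $V=\langle s^4,s(s+t)^3,t^4\rangle$; in coordinates $\sum a_is^{4-i}t^i$ one has $V=\{a_3=a_2=3a_1\}$. A direct computation lists all pairs $(m,\ell)$ of linear forms with $m\ell^3\in V$: up to scalars these are exactly $(s,s),(t,t),(s,s+t),(t,3s+t)$, with $m\ell^3=s^4,\,t^4,\,s(s+t)^3,\,t(3s+t)^3$. By the dictionary, any smooth Galois point $P$ gives a linear $m$ and non-proportional $\lambda_1,\lambda_2$ with $m\lambda_1^3,m\lambda_2^3\in V$; from the list this forces either $m\sim s$ and $\{\lambda_1,\lambda_2\}=\{s,s+t\}$ (so $\widetilde P=(0:1)$, $P=(0:0:1)$), or $m\sim t$ and $\{\lambda_1,\lambda_2\}=\{t,3s+t\}$ (so $\widetilde P=(1:0)$, $P=(1:1:0)$). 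In either case $m$ is proportional to one $\lambda_i$, hence $I_P(C,T_PC)=4$. One then checks directly that $(0:0:1)$ and $(1:1:0)$ are smooth and that the projections $(s^4:s(s+t)^3)=(s^3:(s+t)^3)$ and $(s^4-s(s+t)^3:t^4)=(-s(3s^2+3st+t^2):t^3)$ are cyclic of degree $3$, so both are genuine Galois points with $I_P=4$. Therefore $\delta_2(C)=2$, $\delta_0(C)=0$, and $\delta(C)=\delta_2(C)=2$.

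The main obstacle I anticipate is setting up the dictionary cleanly: proving that $\pi_P\circ\varphi$ factors through the degree-$3$ quotient by the Galois group and that, after clearing the base locus of $\pi_P$, this is equivalent to the membership relations $m\lambda_1^3,m\lambda_2^3\in V$ --- this needs care with coprimality of the cubic forms involved and with the identification of $[T_PC]$ as the image of $\widetilde P$. The other delicate (and easily overlooked) point is the exclusion of the degenerate case $\nu_1\sim t$ in the forward direction, which is precisely where smoothness of $P_1$, equivalently $\deg(\pi_{P_1}\circ\varphi)=3$, is genuinely used.
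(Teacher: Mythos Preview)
Your strategy is correct and genuinely different from the paper's. The paper fixes projective coordinates on $\Bbb P^2$ so that three distinguished lines (the tangent at $P_1$ and the two ramified secants) become coordinate axes, writes $\varphi$ with one free parameter $\alpha$, and then computes the discriminant of the Wronskian of $\pi_{P_2}\circ\varphi$ to pin down $\alpha=3$ via Lemma~\ref{Galois}. For the final count $\delta(C)=2$ the paper invokes the St\"ohr--Voloch inequality to show that all four flexes are already accounted for. Your route is more intrinsic: you translate ``$P$ is smooth Galois'' into the linear-algebraic condition $m\lambda_1^3,\,m\lambda_2^3\in V$, deduce $s^4,t^4,s(s+t)^3\in V$ directly, and for $\delta(C)=2$ you enumerate all $(m,\ell)$ with $m\ell^3\in V$ rather than appealing to St\"ohr--Voloch. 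Your approach is more elementary and self-contained; the paper's has the side benefit of proving Proposition~\ref{Total2} in the same breath (it starts only from $\delta_2\ge1$, $\delta\ge2$).

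Two small slips to fix. First, the defining relations of $V$ in coordinates $\sum a_i s^{4-i}t^i$ are $a_1=a_2=3a_3$, not $a_3=a_2=3a_1$ (check on $s(s+t)^3$, where $(a_1,a_2,a_3)=(3,3,1)$); your list of four pairs $(m,\ell)$ is nevertheless correct. Second, in the exclusion of $\nu_1\sim t$ you write ``contradicting smoothness of $P_1$'', but $\varphi(1{:}0)$ is where $\mu_2=t$ vanishes, so the triple point is $P_2$, not $P_1$. Alternatively, you can avoid singularities there entirely: if $V=\langle s^4,st^3,t^4\rangle$ then $t\nu_2^3\in V$ forces $\nu_2\sim t\sim\mu_2$, contradicting $\nu_2\not\sim\mu_2$.
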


\begin{remark}
The curve described in Proposition \ref{TotalFlexes} has exactly three ordinary double points, since $\varphi(1, t)=\varphi(1, u)$ for three pairs 
\begin{eqnarray*} 
(t, u)&=&(i\sqrt{3},\ -i\sqrt{3}), \\ 
& & \left(\frac{1}{2}\left(-3+\sqrt{3}+i\sqrt{12-6\sqrt{3}}\right),\ \frac{1}{2}\left(-3+\sqrt{3}-i\sqrt{12-6\sqrt{3}}\right)\right), \\
& & \left(\frac{1}{2}\left(-3-\sqrt{3}+i\sqrt{12+6\sqrt{3}}\right),\ \frac{1}{2}\left(-3-\sqrt{3}-i\sqrt{12+6\sqrt{3}}\right) \right). 
\end{eqnarray*} 
Therefore, this is of type IIIg in the table of \cite[Theorem 2]{miura}. 
\end{remark}

\begin{proposition} \label{Total2}
There exists no plane rational curve of degree four with $\delta_0(C) \ge 1$ and $\delta_2(C) \ge 1$. 
\end{proposition}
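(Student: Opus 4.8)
Suppose, for a contradiction, that $C$ carries a $\delta_2$ Galois point $P_1$ and a $\delta_0$ Galois point $P_0$. The plan is to read off a normal form for $C$ from $P_1$ and then show that the inflection points of $C$ obstruct the existence of $P_0$. As a preliminary, I would record (using $p\neq 3$) the following reformulation: a smooth point $P\in C$ is an inner Galois point exactly when the degree-three morphism $\pi_P\circ\varphi\colon\Bbb P^1\to\Bbb P^1$ is a Galois covering, which in turn happens exactly when its ramification divisor equals $2q_1+2q_2$ for two distinct points $q_1,q_2$; and then $P$ is $\delta_2$ if $\varphi^{-1}(P)\in\{q_1,q_2\}$ and $\delta_0$ otherwise. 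From this I would deduce the geometric criterion that drives the argument: $C$ has a $\delta_0$ Galois point if and only if $C$ has two distinct ordinary inflection points $Q_1,Q_2$, both smooth points of $C$, whose inflectional tangents meet at a smooth point of $C$; in that case the meeting point is the $\delta_0$ point, and it is precisely the common ``residual point'' of $Q_1$ and $Q_2$, where the residual point of an ordinary inflection $Q$ means the fourth point (besides $Q$, counted three times) in which $T_QC$ meets $C$. The only substantial point is ``$\Leftarrow$'': if $P_0$ is the meeting point, then for $i=1,2$ the line $T_{Q_i}C$ pulls back under $\varphi$ to $\varphi^{-1}(P_0)+3q_i$, so $q_i$ is a totally ramified point of $\pi_{P_0}\circ\varphi$; since the ramification divisor has degree $4$ it must equal $2q_1+2q_2$, whence $\pi_{P_0}\circ\varphi$ is Galois, $\varphi^{-1}(P_0)$ is unramified, and $I_{P_0}(C,T_{P_0}C)=2$.

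Next I would put $C$ in normal form. As in the proof of Proposition~\ref{TotalFlexes}, a rational quartic with a $\delta_2$ Galois point $P_1$ can be moved so that $P_1=(0:1:0)$, $T_{P_1}C=\{Z=0\}$, $\varphi^{-1}(P_1)=(1:0)$ and $\pi_{P_1}\circ\varphi=(s^3:t^3)$, and this forces
$$ \varphi(s:t)=\bigl(\, s^3t \ :\ s^4+b_1s^3t+b_2s^2t^2+b_3st^3+b_4t^4 \ :\ t^4 \,\bigr) $$
for some $b_1,\dots,b_4\in K$. A direct computation of the Wronskian of the three components of $\varphi$ gives, up to a nonzero scalar,
$$ W(s,t)=s\,t^2\bigl(\, 2s^3-b_2st^2-b_3t^3 \,\bigr), $$
so in the affine chart $t=1$ the inflection points of $C$ lie over $u=\infty$ (the hyperflex $P_1$), $u=0$, and the three roots of $2u^3-b_2u-b_3=0$. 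One checks that the inflection over $u=0$ is ordinary with residual point $P_1$, and that the residual point of the inflection over a root $u=c\neq 0$ has parameter $c^\flat=(b_2-3c^2)/(3c)$.

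Finally I would carry out the case analysis. By the criterion, a $\delta_0$ point would force two distinct ordinary inflections of $C$ to share a residual point. The inflection over $u=0$ cannot be one of the pair, since its residual point is $P_1$, and $P_1$, being a hyperflex, has $I_{P_1}(C,T_{P_1}C)=4\neq 2$ and so is not a $\delta_0$ point. Hence one would need two roots $c_i\neq c_j$ of $2u^3-b_2u-b_3$ with $c_i^\flat=c_j^\flat$; a short computation using the coefficient relations of this cubic then yields $c_j=-2c_i$ and forces the cubic to have a double root at $c_i$, so that $\varphi(c_i)$ is a hyperflex or a singular point rather than an ordinary inflection — and the two ordinary inflections that then survive (over $u=0$ and over the simple root) still fail to share a residual point. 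This contradiction proves the proposition. The part that genuinely needs care is the treatment of the degenerate configurations — above all $b_3=0$, where $\varphi(0:1)$ becomes a cusp; multiple roots of the cubic factor of $W$; and the possibility that a residual point is a singular point of $C$ — each of which must be inspected and shown not to produce a $\delta_0$ point; but none of these requires a new idea.
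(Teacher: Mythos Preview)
Your argument is correct, but it takes a genuinely different route from the paper's.

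The paper proves Propositions~\ref{TotalFlexes} and~\ref{Total2} together: assuming $\delta_2(C)\ge 1$ and $\delta(C)\ge 2$, it chooses coordinates adapted to \emph{both} Galois points at once. With $P_1$ the $\delta_2$ point and $P_2$ any second Galois point, the three lines $T_{P_1}C$, $\overline{P_1Q_1}$, $\overline{P_2Q_2}$ are taken as coordinate axes, which forces the one-parameter family $\varphi=(s^4:s(s+t)^3:t(t+\alpha s)^3)$. A single discriminant computation for $\pi_{P_2}\circ\varphi$ then shows $P_2$ is Galois only for $\alpha=3$, and in that case $I_{P_2}(C,T_{P_2}C)=4$. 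Thus any second Galois point is again of type $\delta_2$, which is exactly Proposition~\ref{Total2}.

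Your approach instead normalizes using $P_1$ alone, leaving four free parameters, and replaces the direct projection computation by a global inflection analysis: the criterion ``$\delta_0$ point $\Leftrightarrow$ two ordinary flexes with a common residual point'' plus the Wronskian $st^2(2s^3-b_2st^2-b_3t^3)$ and the symmetric-function identity forcing a double root. The paper's method is shorter and yields Proposition~\ref{TotalFlexes} for free; yours isolates a reusable geometric characterization of $\delta_0$ points, at the cost of the degenerate-case bookkeeping you flag. One small remark: the appeal ``as in the proof of Proposition~\ref{TotalFlexes}'' is slightly misleading, since the paper's normal form there already builds in the second Galois point; your $(s^3t:\,\cdot\,:t^4)$ form is a different (and more general) normalization, though its derivation from Lemma~\ref{Galois} is just as straightforward.
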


\begin{proposition} \label{NonFlexes} 
Let $C \subset \Bbb P^2$ be a plane rational curve of degree four. 
We have $\delta_0(C) \ge 2$ if and only if $C$ is projectively equivalent to the curve which is the image of the morphism 
$$ \Bbb P^1 \rightarrow \Bbb P^2; (s:t) \mapsto (s(t+\alpha s)^3:t(t+\beta s)^3:t(t+s)^3),  \hspace{10mm} (*)$$
where $\beta(\beta-1)(\beta^2+\beta+1) \ne 0$ and $\alpha=(\beta^2+\beta+1)/3$.  

Furthermore, if the differential map of $\varphi$ between Zariski tangent spaces at some points is zero, then $C$ is projectively equivalent to the curve 
$$ \Bbb P^1 \rightarrow \Bbb P^2; (s:t) \mapsto (s^4:ts^3:t(t+s)^3). $$
In this case, $\delta(C)=\delta_0(C)=2$. 
\end{proposition}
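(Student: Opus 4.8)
The plan is to prove the two implications and then the addendum in four steps; the elimination in the second step and the count in the last one will be the genuinely hard parts. \emph{Step 1 (a normal form from one non-flex Galois point).} Suppose $\delta_0(C)\ge 2$ and fix smooth Galois points $P_1,P_2$ with $I_{P_i}(C,T_{P_i}C)=2$. For a single such $P$, with $Q=\varphi^{-1}(P)\in\Bbb P^1$, the cover $\pi_P\circ\varphi$ is cyclic of degree $3$; since $P$ is smooth it has degree exactly $d-1=3$, so if $\varphi=(f_0:f_1:f_2)$ with $P=(1:0:0)$ then $\gcd(f_1,f_2)$ is the linear form $\ell_Q$ at $Q$, and $h_1:=f_1/\ell_Q$, $h_2:=f_2/\ell_Q$ span a two-dimensional space $\langle u^3,v^3\rangle$, where $[u],[v]$ are the two ramification points of $\pi_P\circ\varphi$; moreover $I_P(C,T_PC)=2$ (not $4$) forces $Q\notin\{[u],[v]\}$. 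A projective change of coordinates on $\Bbb P^2$ carrying $(h_1,h_2)$ to $(u^3,v^3)$, together with one on $\Bbb P^1$, then lets me assume
$$\varphi=\bigl(f_0:t(t+\beta s)^3:t(t+s)^3\bigr),\qquad \beta\ne 0,1,\quad t\nmid f_0,$$
with residual freedom $f_0\mapsto\lambda f_0+c_1t(t+\beta s)^3+c_2t(t+s)^3$.

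\emph{Step 2 (imposing the second Galois point).} Now require $P_2=\varphi(Q_2)$, $Q_2\ne(1:0)$, to be smooth Galois with $I_{P_2}=2$. If $W=\langle f_0,t(t+\beta s)^3,t(t+s)^3\rangle$ and $\ell_2$ is the linear form at $Q_2$, then $\{g\in W:g(Q_2)=0\}=\ell_2V_2$ with $V_2\subset H^0(\Bbb P^1,\mathcal O(3))$ two-dimensional, and the structural fact of Step~1 applied at $P_2$ says precisely that $V_2$ is spanned by two cubes, i.e. that $\Bbb P(V_2)$ is a secant or tangent line of the twisted cubic of perfect cubes inside $\Bbb P^3=\Bbb P(H^0(\Bbb P^1,\mathcal O(3)))$. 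Writing $V_2$ out in the coefficients of $f_0$ and in $Q_2$ turns this into an elimination problem; I expect that after using the residual freedom and sorting out the possible positions of $Q_2$ (generic, or at a zero of $f_0f_1f_2$) and the secant-versus-tangent alternative, it forces $f_0$ to be projectively equivalent to $s(t+\alpha s)^3$ with $\alpha=(\beta^2+\beta+1)/3$ — that is, forces $C$ into the family $(*)$ — the conditions $\beta\ne 0,1$ and $\beta^2+\beta+1\ne 0$ being exactly the non-degeneracy for $C$ to be an irreducible quartic carrying two distinct such points. This elimination is the main obstacle.

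\emph{Step 3 (sufficiency).} Conversely, for $C$ the image of $(*)$ the point $P_1=\varphi(1:0)=(1:0:0)$ lies in $\delta_0(C)$: the projection from $P_1$ is $\bigl(t(t+\beta s)^3:t(t+s)^3\bigr)=\bigl((t+\beta s)^3:(t+s)^3\bigr)$, the cube of a M\"obius map and so a $\Bbb Z/3$-cover, while a local expansion of $\varphi$ at $(1:0)$ shows $P_1$ is a smooth point with $I_{P_1}(C,T_{P_1}C)=2$. For a second such point one verifies that the relation $\alpha=(\beta^2+\beta+1)/3$ is exactly what turns the projection from $\varphi(1:-\beta)$, after clearing the linear factor forced on it, into a $\Bbb Z/3$-cover, and a local expansion again yields smoothness and $I=2$; hence $\delta_0(C)\ge 2$. (The two points could instead be produced symmetrically.)

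\emph{Step 4 (the addendum and the count).} Within $(*)$, the differential of $\varphi$ vanishes at some point iff $C$ has a cuspidal (not merely nodal) singularity, and computing this out one finds it happens exactly for $\beta=-2$ and $\beta=-1/2$, each giving a quartic with a single $(3,4)$-cusp and hence projectively equivalent to $(s^4:ts^3:t(t+s)^3)$, with the cusp at $(0:0:1)=\varphi(0:1)$. For this curve $\delta_2(C)=0$: since $\delta_0(C)\ge 2\ge 1$, Proposition~\ref{Total2} excludes $\delta_2(C)\ge 1$, so $\delta(C)=\delta_0(C)$. That $\delta_0(C)\le 2$ I would obtain from a direct finite computation on this explicit curve — the criterion ``$V_2$ spanned by two cubes'' of Step~2 becomes an explicit system in $Q$ with finitely many solutions, of which exactly two yield smooth points with $I=2$ — or, alternatively, from the observation that a third member of $\delta_0(C)$ would over-determine the normalization of Steps~1--2 incompatibly with the presence of the cusp. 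Therefore $\delta(C)=\delta_0(C)=2$. Aside from this count and the elimination of Step~2, the whole argument is bookkeeping with linear series on $\Bbb P^1$ and local parametrizations of $\varphi$.
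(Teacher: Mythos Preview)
Your strategy—normalize using one Galois point, then constrain $f_0$ by the second—is sound in principle, but two concrete issues stand out. First, Step~2 is not actually executed: you describe an elimination on the coefficients of $f_0$ via the secant-line criterion but do not carry it through, calling it ``the main obstacle.'' The paper sidesteps this entirely by a sharper choice of coordinates. Rather than using only one Galois point's data, it takes the three lines $\overline{P_1Q_1}$, $\overline{P_2Q_2}$, $\overline{P_2R_2}$ (where $Q_1$ is a ramification point for $\pi_{P_1}$ and $Q_2,R_2$ are the two ramification points for $\pi_{P_2}$) as the coordinate axes $X=0$, $Y=0$, $Z=0$. Intersection multiplicities along these lines then force
\[
\varphi=\bigl(s(t+\alpha s)^3:t(t+\beta s)^3:t(t+\gamma s)^3\bigr)
\]
immediately, with no elimination left except a single discriminant: one writes $\pi_{P_1}\circ\varphi=((t+\alpha)^3:g(t))$ with $P_1=\varphi(0:1)=(0:1:1)$, computes the Wronskian $(t+\alpha)^2((t+\alpha)g'-3g)$, and the discriminant of the quadratic factor gives $\alpha=(\beta^2+\beta\gamma+\gamma^2)/(3\beta)$ or $(\beta^2+\beta\gamma+\gamma^2)/(3\gamma)$. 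Normalizing $\gamma=1$ yields $(*)$. This is exactly your ``$V_2$ spanned by two cubes'' condition, but reached without any elimination because the third coordinate line was chosen from $P_2$'s second ramification point rather than left free.

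Second, your Step~3 misidentifies the second Galois point. In the family $(*)$ the first Galois point is $\varphi(1:0)$, but the second is $\varphi(0:1)=(0:1:1)$, not $\varphi(1:-\beta)$. Indeed, projecting from $\varphi(1:-\beta)=((\alpha-\beta)^3:0:-\beta(1-\beta)^3)$ and clearing the forced factor $(t+\beta s)$ leaves one component equal to $t(t+\beta s)^2$, which carries a ramification point of index $2$; this is incompatible with a cyclic degree-$3$ cover (and one checks the common factor is exactly $(t+\beta s)$ unless $\beta=-\tfrac12$). The correct verification is for $\pi_{(0:1:1)}\circ\varphi$, whose components after dividing by $s$ are $(t+\alpha)^3$ and $(\beta-1)t(3t^2+3(\beta+1)t+(\beta^2+\beta+1))$; the relation $\alpha=(\beta^2+\beta+1)/3$ is precisely what makes Lemma~\ref{Galois} apply. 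Your Step~4 plan for the cuspidal case and the count $\delta_0=2$ is reasonable in outline, though (like the paper itself) you do not carry it out.
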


\begin{remark}
The curve given by
$$ \Bbb P^1 \rightarrow \Bbb P^2; (s:t) \mapsto (s^4:ts^3:t(t+s)^3) $$
is projectively equivalent to the curve in \cite[Example 1]{miura}. 
Therefore, our family contains Miura's example. 
\end{remark}

A computer (the author used MATHEMATICA \cite{wolfram}) verifies the following conjecture. 
 (For singularities, we prove only when $p=0$.)

\begin{conjecture} \label{main} 
We have $\delta_0(C)=3$ if and only if $C$ is projectively equivalent to the curve given by $(*)$ with
$$ \beta^4+3\beta^3+\beta^2+3\beta+1=0.$$ 
In this case, the curve has three ordinary double points, i.e. it belongs to the class $IIIg$ in the table of \cite[Theorem 2]{miura}. 
\end{conjecture}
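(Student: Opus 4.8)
The plan is to convert the problem into a finite computation about a set of binary forms, carry out an elimination, and read off the quartic. First reduce: since $\delta_0(C)=3\ge 1$, Proposition~\ref{Total2} forces $\delta_2(C)=0$, so every Galois point of $C$ is a smooth point with $I_P(C,T_PC)=2$; and since $\delta_0(C)\ge 2$, Proposition~\ref{NonFlexes} lets us assume that $C$ is the image of the morphism $(*)$ for some $\beta$ with $\beta(\beta-1)(\beta^2+\beta+1)\ne 0$ and $\alpha=(\beta^2+\beta+1)/3$. Let $V\subset H^0(\mathbb{P}^1,\mathcal{O}(4))$ be the $3$-dimensional space spanned by $f_0=s(t+\alpha s)^3$, $f_1=t(t+\beta s)^3$, $f_2=t(t+s)^3$, so $\varphi=(f_0:f_1:f_2)$, and set
$$ \mathcal{S}=\{(L,N):L,N \text{ binary linear forms up to scalar},\ LN^3\in V\}\subset\mathbb{P}^1\times\mathbb{P}^1. $$
Here is the dictionary. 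If $P=\varphi(q)$ with $\varphi$ an immersion at $q$ and $q$ the only preimage, then the pencil of lines through $P$ pulls back, after deleting the base point $q$, to the base‑point‑free pencil $\Pi_q=\{w\in H^0(\mathcal{O}(3)):L_qw\in V\}$ (with $L_q$ the linear form vanishing at $q$), and $\pi_P\circ\varphi$ is the degree‑$3$ morphism it defines. By Riemann--Hurwitz a degree‑$3$ morphism $\mathbb{P}^1\to\mathbb{P}^1$ is Galois (automatically cyclic) exactly when it has two totally ramified fibres, i.e.\ exactly when $\Pi_q$ contains two distinct perfect cubes; conversely, whenever $\Pi_q$ is base‑point‑free and contains two distinct perfect cubes, $q$ is automatically a smooth, immersed, single preimage, and the resulting Galois point has $I_P(C,T_PC)=2$ as long as neither cube vanishes at $q$. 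Thus, because $\delta_2(C)=0$, $\delta_0(C)$ is the number of linear forms $L$ for which at least two distinct pairs $(L,N)$ lie in $\mathcal{S}$.

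Next, the morphism $(L,N)\mapsto[LN^3]$ maps $\mathbb{P}^1\times\mathbb{P}^1$ birationally onto a degree‑$6$ surface in $\mathbb{P}(H^0(\mathcal{O}(4)))=\mathbb{P}^4$, so $\mathcal{S}$ — its intersection with the plane $\mathbb{P}(V)$ — consists of $6$ points with multiplicity (one checks directly that $\mathbb{P}(V)$ meets that surface in dimension $0$). In particular $\delta_0(C)\le 3$. Four points of $\mathcal{S}$ are immediate: $(s,t+\alpha s)$, $(t,t+\beta s)$, $(t,t+s)$, and — using $3\alpha=\beta^2+\beta+1$ one verifies — $(s,t+(\alpha/\beta)s)$; the two pairs with first coordinate $s$ give the Galois point $\varphi(0:1)$, the two with first coordinate $t$ give $\varphi(1:0)$, recovering the pair of Galois points of Proposition~\ref{NonFlexes}. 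Since a pencil of binary cubics contains at most two perfect cubes, no further point of $\mathcal{S}$ can have first coordinate $s$ or $t$; hence $C$ has a third Galois point if and only if the two remaining points of $\mathcal{S}$ share their first coordinate.

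To extract the condition, pick a basis $\phi_1,\phi_2$ of the annihilator $V^\perp$. The binary forms $\phi_i(sN^3)$ and $\phi_i(tN^3)$ in $N=n_0s+n_1t$ each split into linear factors over $K$; for instance $\phi_1(tN^3)$ is proportional to $(n_0-n_1)(n_0-\beta n_1)(n_0+(\beta+1)n_1)$, and similarly for the others. Consequently the sextic $S(N)=\phi_1(tN^3)\phi_2(sN^3)-\phi_2(tN^3)\phi_1(sN^3)$, whose roots are the $N$‑coordinates of $\mathcal{S}$, visibly splits off the four known linear factors, leaving an explicit quadratic $\mathcal{B}(N)$ whose two roots $N_5,N_6$ (with product $\alpha$) are the unknown ones. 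Writing $\tau(N)=-\phi_1(tN^3)/\phi_1(sN^3)$ for the corresponding first‑coordinate value, a third Galois point exists precisely when $\tau(N_5)=\tau(N_6)$, i.e.\ when $\phi_1(tN^3)+\tau\,\phi_1(sN^3)$ is divisible by $\mathcal{B}(N)$ for some $\tau$; reducing $\phi_1(tN^3)$ and $\phi_1(sN^3)$ modulo $\mathcal{B}(N)$ and imposing that the two linear remainders be proportional yields, after cancelling the factors $\beta^2(\beta-1)^4(2\beta+1)^2(\beta+2)^2$ that come from degenerate configurations, exactly $\beta^4+3\beta^3+\beta^2+3\beta+1=0$. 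For the converse one runs the same computation backwards: assuming the quartic, the common value $\tau_3:=\tau(N_5)=\tau(N_6)$ is an explicit element of $K\setminus\{0\}$ (a short check against the quartic gives $\tau_3\ne 0,\infty$ and $N_5\ne N_6$), and then $L_{q_3}N_5^3,L_{q_3}N_6^3\in V$ for $q_3=(1:-\tau_3)$, so $\Pi_{q_3}$ is a base‑point‑free pencil with two distinct perfect cubes and $\varphi(q_3)$ is a third Galois point; with $\delta_0(C)\le 3$ this gives $\delta_0(C)=3$.

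Finally, for the statement on singularities when $p=0$: the curve has geometric genus $0$ and arithmetic genus $3$, so $\sum_P\delta_P(C)=3$; and $\varphi$ is an immersion everywhere, since the non‑immersive case of Proposition~\ref{NonFlexes} is the curve $(s^4:ts^3:t(t+s)^3)$, which has $\delta_0=2$ and therefore is not among our curves. Hence every singular point arises from a collision $\varphi(1,t_1)=\varphi(1,t_2)$, and solving this system explicitly — as in the Remark following Proposition~\ref{TotalFlexes} — exhibits exactly three such collisions, each transverse, so $C$ has three ordinary double points and belongs to class $IIIg$ of \cite[Theorem~2]{miura}. The main obstacle is the elimination in the previous paragraph: building the quadratic $\mathcal{B}(N)$, reducing modulo it, and simplifying the proportionality condition down to the displayed quartic with precisely the advertised extraneous factors is exactly what the computer verifies; by hand it is long but purely mechanical, every binary form in sight factoring into linear forms over $K$. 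A secondary point requiring care is the bookkeeping of the degenerate values $\beta\in\{1,-\tfrac12,-2\}$ (and the roots of $2\beta^2+\beta+2$, which are disjoint from the quartic), which must be checked separately to give $\delta_0(C)=2$ so that those factors are genuinely extraneous, and the verification that the third Galois point is distinct from the first two.
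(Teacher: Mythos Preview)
Your argument is correct at the level of a computational sketch, and it is organized differently from the paper's, though the underlying computation is essentially the same.

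The paper works entirely on the curve: it computes the Hessian of $\varphi$ to find the flexes, identifies the four obvious ones at $t=-1,-\alpha,-\beta,-(\beta^2+\beta+1)/(3\beta)$ together with two new flexes $t_1,t_2$ given by a quadratic in $t$, then writes down the tangent lines $T_1,T_2$ at those two flexes, pulls them back, and asks a computer to solve the system $\varphi^*T_1=\varphi^*T_2=0$ for a common $t$. That common $t$ is the putative third Galois point, and the solution set is exactly your quartic in $\beta$ (with $\beta=-2,-\tfrac12$ discarded). The confirmation of the if-part is then a separate direct check that $\pi_{\varphi(1:u)}\circ\varphi$ is ramified at $t_1,t_2$.

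Your route replaces the Hessian/tangent-line computation by the incidence set $\mathcal{S}=\{(L,N):LN^3\in V\}$ and the observation that $(L,N)\mapsto[LN^3]$ embeds $\mathbb{P}^1\times\mathbb{P}^1$ as a degree-$6$ surface in $\mathbb{P}^4$, so $\mathcal{S}$ has six points; this gives the bound $\delta_0(C)\le 3$ a priori, something the paper obtains only after the explicit flex list. Your $N$-coordinates of $\mathcal{S}$ are exactly the paper's flexes (a flex at $r$ is precisely an $N$ vanishing at $r$ with $LN^3\in V$ for some $L$), your quadratic $\mathcal{B}(N)$ is the paper's quadratic for $t_1,t_2$, and your condition $\tau(N_5)=\tau(N_6)$ is the statement that the tangent lines at the two new flexes meet $C$ in the same residual point---which is exactly the paper's system $\varphi^*T_1=\varphi^*T_2=0$. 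So the eliminations are the same computation read through two dictionaries; what your packaging buys is the clean intersection-theoretic count and the uniform description of Galois points via pairs $(L,N)$, while the paper's buys explicitness (the quadratic, the value $u=(\beta^3+2\beta^2-\beta+1)/3$, and the three degree-$8$ factors governing the nodes are all displayed).

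Two small points to tighten. First, the assertion that $\mathbb{P}(V)$ meets the degree-$6$ surface in dimension $0$ (so that $|\mathcal{S}|=6$ with multiplicity) deserves a one-line justification for the specific $V$, e.g.\ that the four known points are already isolated. Second, you should record explicitly that for $\beta$ on the quartic the new $L$-value is $\neq s,t$ and the two cubes $N_5,N_6$ are not proportional to it, so the third Galois point is genuinely new and of $\delta_0$-type; you flag this bookkeeping but do not carry it out, whereas the paper's Section~5 does the analogous verification by direct substitution.
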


Our result implies that the following bound for rational curves due to Miura \cite[Theorem 1]{miura} is sharp. 

\begin{fact}[Miura] 
If $p=0$, $C \subset \Bbb P^2$ is rational and $d-1$ is prime, then $\delta(C) \le 3$. Furthermore, $\delta(C)=3$ only if $d=4$ and $C$ has no cusp. 
\end{fact}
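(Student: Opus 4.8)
The Fact is Miura's theorem \cite[Theorem~1]{miura}, so for the present note only its sharpness needs to be recorded, and that part is immediate: by the computer-verified Conjecture~\ref{main} there is a plane rational curve of degree four with $\delta_0(C)=3$, hence with $\delta(C)=3$ (by Proposition~\ref{Total2} this curve has $\delta_2(C)=0$); since $4-1=3$ is prime, Miura's bound $\delta(C)\le 3$ is attained. For completeness I would reprove the bound itself along the following lines.

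First I would fix a smooth Galois point $P\in C$ and put $q:=d-1$, so that $\pi_P\circ\varphi\colon\Bbb P^1\to\Bbb P^1$ is a Galois covering of degree $q$, tamely ramified since $p=0$, with cyclic Galois group $G_P$ of prime order $q$. The Riemann--Hurwitz formula on $\Bbb P^1$ gives $\sum_i(e_i-1)=2q-2$, and every ramified point of this covering is totally ramified (its inertia group is a nontrivial subgroup of the prime-order group $G_P$, hence equals $G_P$); so there are exactly two branch points, each with a single preimage, and $G_P\subset\mathrm{Aut}(\Bbb P^1)=\mathrm{PGL}(2,K)$ is cyclic of prime order with exactly two fixed points. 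A local computation at $Q:=\varphi^{-1}(P)$ shows the ramification index of $\pi_P\circ\varphi$ there is $I_P(C, T_PC)-1$; being $1$ or $q$, this recovers $I_P(C, T_PC)\in\{2,d\}$ and the splitting $\delta(C)=\delta_2(C)+\delta_0(C)$, with $Q$ a fixed point of $G_P$ exactly in the $\delta_2$ case. Then I would let $P_1,\dots,P_k$ be the distinct smooth Galois points, note that they yield distinct cyclic subgroups $G_{P_i}$ of order $q$ in $\mathrm{PGL}(2,K)$ (if $G_{P_i}=G_{P_j}$, a generic $G_{P_i}$-orbit would be collinear with both $P_i$ and $P_j$, forcing the line $\overline{P_iP_j}$ to contain $C$, so $P_i=P_j$), and consider $G:=\langle G_{P_1},\dots,G_{P_k}\rangle\subset\mathrm{PGL}(2,K)$. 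One shows $G$ is finite (see below) and applies Klein's classification of finite subgroups of $\mathrm{PGL}(2,K)$ in characteristic zero, namely cyclic, dihedral, $A_4$, $S_4$, $A_5$. For $q\ge 7$ only cyclic and dihedral groups can contain an element of order $q$, and each has a unique subgroup of that order, so $k\le 1$. For $q=5$ (that is, $d=6$) the remaining possibility is $G\cong A_5$, and a short argument matching the fixed-point pairs of its six order-$5$ subgroups against the two distinguished lines attached to each Galois point gives $k\le 2$. For $q=3$ (that is, $d=4$) the subgroup count is too weak, since $A_4$, $S_4$ and $A_5$ each contain several subgroups of order three; here I would instead invoke the results of this note: Proposition~\ref{TotalFlexes} gives $\delta_2(C)\le 2$, Proposition~\ref{Total2} forbids $\delta_2(C)\ge 1$ and $\delta_0(C)\ge 1$ simultaneously, and Proposition~\ref{NonFlexes} gives $\delta_0(C)\le 3$, whence $\delta(C)\le 3$. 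Finally $\delta(C)=3$ forces $d=4$ by the above, and then Conjecture~\ref{main} (or \cite[Theorem~2]{miura}) puts the curve in class $IIIg$, which has three ordinary double points and hence no cusp.

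The step I expect to be the main obstacle is the finiteness of $G$. For a smooth plane curve it is automatic, since $\mathrm{Aut}(C)$ is a finite subgroup of $\mathrm{PGL}(3,K)$ and each $G_{P_i}$ consists of projective transformations fixing $P_i$; but here $C$ is a singular rational curve, and the action of $G_{P_i}$ on the normalization $\Bbb P^1$ need not descend to $C\subset\Bbb P^2$. I would instead work on $\Bbb P^1$: after a change of source coordinates the two ramification points of $\pi_{P_i}\circ\varphi$ lie at $(0:1)$ and $(1:0)$ and $G_{P_i}$ is generated by $(s:t)\mapsto(\zeta s:t)$ for a primitive $q$-th root of unity $\zeta$; writing the defining forms of $\varphi$ in each such normalization and comparing the results for two and then three Galois points yields rigid polynomial identities in the coefficients of $\varphi=(f_0:f_1:f_2)$ with only finitely many solutions. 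For $d=4$ this finiteness — and the refinement of the subgroup classification that it encodes — is exactly what Propositions~\ref{TotalFlexes}--\ref{NonFlexes} supply, so that the only genuinely new ingredient needed for the Fact is the elementary count forcing $d=4$ and excluding cusps.
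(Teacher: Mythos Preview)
The paper does not prove this Fact at all: it is quoted as Miura's theorem \cite[Theorem~1]{miura} and serves only as background, the paper's point being that the bound is attained. You recognize this correctly in your first sentence, and your one-line sharpness argument (Conjecture~\ref{main} furnishes a degree-four rational curve with $\delta(C)=3$) is exactly what the paper asserts.

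Your supplementary reproof of the bound is therefore not comparable to anything in the paper. A few remarks on it nonetheless. First, for $q=3$ you appeal to Propositions~\ref{TotalFlexes}--\ref{NonFlexes} of the present note to obtain $\delta(C)\le 3$; this is anachronistic relative to Miura's 2005 argument, and it also overstates what Proposition~\ref{NonFlexes} delivers: that proposition parametrizes the curves with $\delta_0(C)\ge 2$ by a one-parameter family but does not by itself bound $\delta_0(C)$---the inequality $\delta_0(C)\le 3$ emerges only after the computer analysis of Sections~4--5. Second, as you yourself flag, the finiteness of $G=\langle G_{P_1},\dots,G_{P_k}\rangle\subset\mathrm{PGL}(2,K)$ is unproved, and the polynomial-identity workaround you sketch is effectively a rederivation of the paper's propositions rather than an independent step. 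Third, the ``short argument'' for $q=5$ is too vague to evaluate. If you want a self-contained proof of the Fact, consult \cite{miura} directly; the present paper neither needs nor supplies one.
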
 

In \cite[p. 291]{miura}, Miura says ``For the remaining cases, we can not obtain a better estimation than Theorem 1. ... Problems. (1) {\it Find a better estimation $\delta(C)$ for the case when $C$ has no cusp.}''  
Our result gives an answer to this problem.

\begin{remark}
If $Q \in {\rm Sing}(C)$ and $\varphi^{-1}(Q)=\{Q_1, Q_2, Q_3\}$, then $\delta(C) \le 1$. 

Assume by contradiction that $P_1, P_2$ are inner Galois points. 
Since $Q_1, Q_2, Q_3$ are contained in the same fiber for $\pi_{P_i}\circ \varphi$ for $i=1, 2$, there exists $\sigma_i \in G_{P_i}$ of order three such that $\sigma_i(Q_1)=Q_2$ (\cite[III. 7.1]{stichtenoth}). 
Then, we have $\sigma_i(Q_2)=Q_3$ and $\sigma_i(Q_3)=Q_1$. 
Since $\sigma_1, \sigma_2$ are automorphisms of $\Bbb P^1$, $\sigma_1=\sigma_2$. 
Then, the sets of ramification points are the same for $\pi_{P_1}$ and $\pi_{P_2}$.  
This is a contradiction. 

By this, $\delta(C) \le 1$ for the type II1/2a in the table of \cite[Theorem 2]{miura}. 
\end{remark}

\section{Proof of Propositions \ref{TotalFlexes} and \ref{Total2}} 
First we note the following. 

\begin{lemma} \label{Galois} 
Let $e$ be a prime number different from $p$ and let $\theta:\Bbb P^1 \rightarrow \Bbb P^1$ be a surjective morphism of degree $e$. 
The morphism $\theta$ is a (cyclic) Galois covering if and only if there exist two different points $P_1, P_2$ such that the ramification indices at $P_1, P_2$ are equal to $e$.  
\end{lemma}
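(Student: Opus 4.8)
The plan is to prove the two implications separately: the forward direction via the Riemann--Hurwitz formula, and the converse by normalizing coordinates to put $\theta$ into the form $x \mapsto x^e$.

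For the ``only if'' direction, I would start from a Galois group $G$ of $\theta$ and note that $|G| = \deg\theta = e$ is prime, so $G$ is cyclic of order $e$. Since $e \ne p$ the covering is tame. In a Galois covering the group acts transitively on each fibre, so all points of a fixed fibre share one ramification index, which divides $|G| = e$; as $e$ is prime, this common index is $1$ (the fibre is unramified) or $e$ (the fibre is a single totally ramified point). Writing $r$ for the number of branch points and applying Riemann--Hurwitz to $\theta: \Bbb P^1 \to \Bbb P^1$, one gets $-2 = -2e + r(e-1)$, hence $r(e-1) = 2(e-1)$ and $r = 2$. The two points lying over these two branch points are then the required $P_1, P_2$ with ramification index $e$.

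For the ``if'' direction, suppose $P_1 \ne P_2$ each have ramification index $e$ under $\theta$. Since the ramification index at $P_i$ already equals $\deg\theta$, the point $P_i$ is the whole fibre $\theta^{-1}(\theta(P_i))$; in particular $\theta(P_1) \ne \theta(P_2)$. Choose a coordinate $x$ on the source and $y$ on the target so that $P_1 \mapsto 0$, $P_2 \mapsto \infty$ and $\theta(P_1) = 0$, $\theta(P_2) = \infty$. Then, viewed as an element of $K(x)$, the function $\theta$ has divisor $e\,[0] - e\,[\infty]$ and therefore equals $c\,x^e$ for some $c \in K^\times$; after rescaling $y$ we may assume $\theta(x) = x^e$. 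Because $K$ is algebraically closed (so it contains all $e$-th roots of unity) and $e \ne p$ (so $T^e - y$ is separable over $K(y)$), the extension $K(x)/K(x^e)$ is Galois with group the cyclic group $\mu_e$ acting by $x \mapsto \zeta x$. Hence $\theta$ is a cyclic Galois covering.

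The only delicate point is the bookkeeping in the forward direction --- namely, using primality of $e$ to conclude that every ramified fibre is a single totally ramified point, so that each branch point contributes exactly $e - 1$ to the Riemann--Hurwitz sum; after that the equality $r = 2$ drops out. The converse is a normal-form computation and should present no real obstacle.
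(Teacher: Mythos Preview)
Your proof is correct and follows essentially the same route as the paper: the paper dispatches the only-if direction by citing a reference (where your Riemann--Hurwitz count is precisely the standard argument), and for the if-part it likewise moves $P_1, P_2$ and their images to $0$ and $\infty$ and reads off $\theta = (s^e : t^e)$ from the total ramification, the only cosmetic difference being that it works with homogeneous polynomials rather than divisor language. Your explicit check that $\theta(P_1)\ne\theta(P_2)$ is a small improvement, as the paper leaves this implicit.
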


\begin{proof} 
The only-if part is well-known (see \cite[III. 7.2]{stichtenoth}). 
We prove the if-part. 
Let $\theta=(f(s,t):g(s,t))$, where $f, g$ are homogeneous polynomials of degree $e$. 
We may assume that $P_1=(0:1)$, $P_2=(1:0)$, $\theta(P_1)=(0:1)$ and $\theta(P_2)=(1:0)$. 
Since $\theta$ is ramified at $P_1$ with index $e$, we have $f(s, t)=a s^e$ for some $a \in K \setminus 0$. 
We also have $g(s, t)=b t^e$ for some $b \in K \setminus 0$. 
Therefore, we may assume that $\theta(s:t)=(s^e:t^e)$. 
This implies that the morphism $\theta$ is a cyclic Galois covering. 
\end{proof}

We assume that $\delta_2(C) \ge 1$ and $\delta(C) \ge 2$. 
Let $P_1, P_2 \in C$ be smooth points such that $P_1, P_2$ are Galois, and let $Q_1, Q_2$ be (the images of) ramification points different from $P_1, P_2$ for $\pi_{P_1}, \pi_{P_2}$ respectively. 
Assume that $I_{P_1}(C, T_{P_1}C)=4$.  
Note that the lines $T_{P_1}C$, $\overline{P_1Q_1}$, $\overline{P_2Q_2}$ are not concurrent, where $\overline{P_1Q_1}$ is the line passing through $P_1, Q_1$. 
Therefore, we take a suitable system of coordinates so that they are defined by $X=0$, $Y=0$, $Z=0$ respectively. 
Let $P_1=\varphi(0:1)$, $Q_1=\varphi(1:-1)$, $P_2=\varphi(1:0)$ and let $Q_2=\varphi(1:-\alpha)$, where $\alpha \ne 0$. 
Considering the intersection numbers by lines $X=0$, $Y=0$ $Z=0$, the morphism $\varphi$ is represented by 
$$ \Bbb P^1 \rightarrow \Bbb P^2; (s:t) \mapsto (s^4:s(s+t)^3:t(t+\alpha s)^3). $$
We consider the projection $\pi_{P_2}$. 
Since $P_2=\varphi(1:0)=(1:1:0)$, $\pi_{P_2}$ is represented by $(Y-X:Z)$. 
Then, 
$$\pi_{P_2}\circ\varphi=(t^2+3t+3:(t+\alpha)^3) $$
and 
\begin{eqnarray*}
& & \det\left(\begin{array}{cc} \pi_{P_2}\circ \varphi \\ d_t(\pi_{P_2}\circ \varphi) 
\end{array}\right) 
=\det\left(\begin{array}{cc} t^2+3t+3 & (t+\alpha)^3  \\
2t+3 & 3(t+\alpha)^2 
\end{array}\right) \\
&=&(t+\alpha)^2(t^2-2(\alpha-3)t-3(\alpha-3)). 
\end{eqnarray*} 
The discriminant of $t^2-2(\alpha-3)t-3(\alpha-3)$ is $4\alpha(\alpha-3)$. 
Using Lemma \ref{Galois}, we have that the point $P_2$ is Galois if and only if $\alpha=3$. 
If $\alpha=3$, $\varphi=(1:(t+1)^3:t(t+3)^3)$. 
Then, we have $I_{P_2}(C, T_{P_2}C)=4$. 
Let $M(C)$ be the integer such that $I_R(C, T_RC)=M(C)$ for a general point $R \in C$. 
Then, $M(C)=2$ and 
$$ \sum_{R \in C \setminus {\rm Sing}(C)} (I_R(C, T_RC)-2) \le (2+1)(2\times 0-2)+3 \times 4=6$$
(\cite[Theorem 1.5]{stohr-voloch}). 
By this inequality, points $P_1, P_2, Q_1, Q_2$ are all flexes.  
Therefore, we have $\delta(C)=\delta_2(C)=2$. 

Note that $9-9(t+1)^3+t(t+3)^3=t^4$. 
This implies that the curve is projectively equivalent to 
$$ \Bbb P^1 \rightarrow \Bbb P^2; (s:t) \mapsto (s^4:s(s+t)^3:t^4). $$
 
\begin{remark}
The if-part of Proposition \ref{TotalFlexes} holds true also in $p=2$. 
If $p=2$, then the described curve is projectively equivalent to the curve given by
$$ \Bbb P^1 \rightarrow \Bbb P^2; (s:t) \mapsto (s^4:st^3:t^4). $$
According to a result of Fukasawa and Hasegawa \cite[Example 1]{fukasawa-hasegawa}, we have $\delta(C)=\infty$. 
\end{remark}

\section{Proof of Proposition \ref{NonFlexes}} 
We assume that $\delta_0(C) \ge 2$.  
Let $P_1, P_2 \in C$ be smooth points such that $P_1, P_2$ are Galois with $I_{P_1}(C, T_{P_1}C)=I_{P_2}(C, T_{P_2}C)=2$, and let $Q_1, Q_2$ be (the images of the) ramification points different from $P_1, P_2$ for $\pi_{P_1}, \pi_{P_2}$ respectively. 
Further, we assume that $R_2$ is a ramification point $\ne P_2, Q_2$ for $\pi_{P_2}$. 
Note that the lines $\overline{P_1Q_1}$, $\overline{P_2Q_2}$, $\overline{P_2R_2}$ are not concurrent. 
Therefore, we take a suitable system of coordinates so that they are defined by $X=0$, $Y=0$, $Z=0$ respectively. 
Let $P_1=\varphi(0:1)$, $P_2=\varphi(1:0)$, $Q_1=(1:-\alpha)$, $Q_2=\varphi(1:-\beta)$ and let $R_2=(1:-\gamma)$, where $\alpha, \beta, \gamma \in K \setminus 0$ and $\beta \ne \gamma$. 
Considering the intersection numbers by lines $X=0$, $Y=0$ $Z=0$, the morphism $\varphi$ is represented by 
$$ \Bbb P^1 \rightarrow \Bbb P^2; (s:t) \mapsto (s(t+\alpha s)^3:t(t+\beta s)^3:t(t+\gamma s)^3). $$
We consider the projection $\pi_{P_1}$. 
Since $P_1=\varphi(0:1)=(0:1:1)$, $\pi_{P_1}$ is represented by $(X:Y-Z)$. 
Then, 
$$\pi_{P_1}\circ\varphi=((t+\alpha)^3:t(t+\beta)^3-t(t+\gamma)^3). $$
Let $g(t)=t(t+\beta)^3-t(t+\gamma)^3=(\beta-\gamma)t(3t^2+3(\beta+\gamma)t+(\beta^2+\beta\gamma+\gamma^2))$. 
Note that the rational map $\pi_{P_1} \circ \varphi$ has degree less than three if and only if $3\alpha^2-3(\beta+\gamma)\alpha+(\beta^2+\beta\gamma+\gamma^2)=0$. 
We have 
$$\det\left(\begin{array}{cc} \pi_{P_1}\circ \varphi \\ d_t(\pi_{P_1}\circ \varphi) 
\end{array}\right) 
=\det\left(\begin{array}{cc} (t+\alpha)^3 & g  \\
3(t+\alpha)^2 & g'
\end{array}\right) 
=(t+\alpha)^2((t+\alpha)g'-3g), 
$$
where
$$\frac{1}{\beta-\gamma}((t+\alpha)g'-3g)=(9\alpha-3(\beta+\gamma))t^2+(6(\beta+\gamma)\alpha-2(\beta^2+\beta\gamma+\gamma^2))t+(\beta^2+\beta\gamma+\gamma^2)\alpha $$
The discriminant is
$$ 4\{9\beta\gamma\alpha^2-3(\beta+\gamma)(\beta^2+\beta\gamma+\gamma^2)\alpha+(\beta^2+\beta\gamma+\gamma^2)^2\}. $$
Using Lemma \ref{Galois}, we have that the point $P_1$ is Galois if and only if  
$$\alpha=\frac{\beta^2+\beta \gamma+\gamma^2}{3\beta} \ \mbox{ or } \ \alpha=\frac{\beta^2+\beta \gamma+\gamma^2}{3\gamma}. $$

If the differential of $\varphi$ is zero at some point, then $\alpha=\beta$ or $\alpha=\gamma$. 
Then, $\varphi$ is represented by 
$$ \Bbb P^1 \rightarrow \Bbb P^2; (s:t) \mapsto (s^4:st^3:t(t+s)^3). $$

By taking a suitable system of coordinates, we may assume that $\alpha=(\beta^2+\beta\gamma+\gamma^2)/3\gamma$ and $\gamma=1$. 
Since $\alpha \ne 0$, we have $\beta^2+\beta+1 \ne 0$. 
The birationality of $\varphi$ is derived from the condition $3\alpha^2-3(\beta+1)\alpha+(\beta^2+\beta+1) \ne 0$. 

\begin{remark}
The if-part of Proposition \ref{NonFlexes} holds true also in $p=2$. 
If $p=2$, then $\alpha=\beta^2+\beta+1$ and the morphism $\pi_{P_1} \circ \varphi$ is a Galois covering, since 
$$ (t+\alpha)^3+\alpha g(t)=(\beta t+\alpha)^3. $$
\end{remark} 

\section{Computer-aided proof of Conjecture \ref{main}} 
We give the computer-aided proof of Conjecture \ref{main}. 
Let $C$ be the plane curve given by $(*)$ as in Proposition \ref{NonFlexes}. 
\begin{itemize}
\item[(1)] We compute the Hessian matrix $H$ and its determinant: 
$$ H= 
\left(\begin{array}{ccc} (t+\alpha)^3 & t(t+\beta)^3 & t(t+1)^3 \\
((t+\alpha)^3)_t & (t(t+\beta)^3)_t & (t(t+1)^3)_t \\
((t+\alpha)^3)_{tt} & (t(t+\beta)^3)_{tt} & (t(t+1)^3)_{tt}
\end{array}
\right).
$$
We have two new flexes given by 
$$ \beta^3+\beta^2+\beta+2\beta^3t+\beta^2t+\beta t+2t+3\beta t^2=0, $$
other than $-1, -\alpha, -\beta, -\frac{\beta^2+\beta+1}{3\beta}$. 
Let $t_1, t_2$ be the solutions of the equation. 
\item[(2)] 
We compute the determinant 
$$ T(x, y, z, t)= 
\det \left(\begin{array}{ccc} x & y & z \\ (t+\alpha)^3 & t(t+\beta)^3 & t(t+1)^3 \\
((t+\alpha)^3)_t & (t(t+\beta)^3)_t & (t(t+1)^3)_t \\
\end{array}
\right), $$
since the linear homogeneous polynomial $T_1=T(x, y, z, t_1)$ (resp. $T_2=T(x, y, z, t_2)$) gives the tangent line at $t=t_1$ (resp. $t=t_2$).  
\item[(3)] 
We compute the pull-backs $\varphi^*{T_1}=T(\varphi(t), t_1)$ and $\varphi^*{T_2}=T(\varphi(t), t_2)$. 
\item[(4)]
We solve the equation $\varphi^*{T_1}=\varphi^*{T_2}=0$ on $t$, under the assumption that 
$$ \beta(\beta^3-1) \ne 0, \det H \ne 0. $$ 
\item[(5)] Computer gives the following solutions: 
$$ \beta^4+3\beta^3+\beta^2+3\beta+1=0 \mbox{ and } t=\frac{\beta^3+2\beta^2-\beta+1}{3}, $$
or $\beta=-2, -\frac{1}{2}$.
We can exclude the case $\beta=-2, -\frac{1}{2}$ by the condition $\alpha \ne 1, \frac{\beta^2+\beta+1}{3\beta} \ne \beta$. 
\end{itemize}

Next, we consider singularities when $p=0$. 
A computer gives $24$ solutions $u$:  
\begin{eqnarray*}
0&=& 1+u+12u^2+31u^3+61u^4+33u^5+36u^6-27u^7+9u^8 \\
0&=&1-8u+27u^2+40u^3+61u^4+48u^5+81u^6+18u^7+9u^8 \\
0&=&1+10u+39u^2-107u^3+16u^4+213u^5+195u^6+63u^7+9u^8
\end{eqnarray*}
for equations
$$ \frac{(t+\alpha)^3}{t(t+1)^3}=\frac{(u+\alpha)^3}{u(u+1)^3},\ \frac{(t+\beta)^3}{(t+1)^3}=\frac{(u+\beta)^3}{(u+1)^3},\ \beta^4+3\beta^3+\beta^2+3\beta+1=0  $$
under the assumption $t \ne u$. 
This shows that the curve has three ordinary double points (for each $\beta$).

\section{Confirmation of the if-part of Conjecture \ref{main}} 
Let $$\beta^4+3\beta^3+\beta^2+3\beta+1=0 \mbox{ and } u=\frac{\beta^3+2\beta^2-\beta+1}{3}. $$
We can confirm that $\varphi(1: u)$ is Galois, by a computer, as follows. 
\begin{itemize}
\item[(1)] The equation $$\beta^3+\beta^2+\beta+2\beta^3t+\beta^2t+\beta t+2t+3\beta t^2=0$$
has two solutions
$$ t_1=-\frac{1-\beta+2\beta^2+\beta^3}{3\beta}, t_2=-\frac{1+2\beta-\beta^2+\beta^3}{3\beta}. $$
\item[(2)] We compute the point $P=(x, y, z)=\varphi(1,u)$, the projection 
$$\pi_P\circ\varphi=(z(t+\alpha)^3-x t(t+1)^3, z t(t+\beta)^3-y t(t+1)^3),$$
and the determinant of the matrix 
$D=\left(\begin{array}{cc} \pi_P\circ \varphi \\ d_t(\pi_P\circ \varphi) 
\end{array}\right).$
\item[(3)] The polynomial $\det D$ has a factor $g(t)$ of degree four on $t$ with $g(u) \ne 0$. 
\item[(4)] We have $g(t_1)=g(t_2)=0$. 
This implies that $\pi_P\circ\varphi$ is ramified at $t=t_1, t_2$. 
\item[(5)] Since points $\varphi(1:t_1), \varphi(1:t_2)$ are flexes, by Lemma \ref{Galois}, $\varphi(1:u)$ is Galois. 
\end{itemize}

\

\begin{center} {\bf Acknowledgements} \end{center} 
The author is grateful to Professor Tsuyoshi Miezaki for helpful advice.

\end{document}